\newtheorem{theorem}{Theorem}[section]
\newtheorem{lemme}[theorem]{Lemma}
\newtheorem{defin}{Definition}[section]
\allowdisplaybreaks \theoremstyle{definition}
\theoremstyle{remark} \numberwithin{equation}{section}
\begin{document}

\title [The uncertainty principle in Clifford analysis]{The uncertainty principle in Clifford analysis\\}%

\author[ J. El Kamel, R. Jday]{ Jamel El Kamel \qquad and \qquad Rim Jday }
 \address{Jamel El Kamel. D\'epartement de Math\'ematiques fsm. Monastir 5000, Tunisia.}
 \email{jamel.elkamel@fsm.rnu.tn}
 \address{Rim Jday . D\'epartement de Math\'ematiques fsm. Monastir 5000, Tunisia.}
 \email{rimjday@live.fr}
\begin{abstract}
In this paper, we provide  the Heisenberg's inequality and the Hardy's theorem for the Clifford-Fourier transform on $\mathbb{R}^m$. 
\end{abstract}
\maketitle
{\it keywords:} Clifford algebra, Clifford-Fourier transform,  Heisenberg's inequality, Hardy's theorem.\\

\noindent MSC (2010) 42B10, 30G35.\\
\section{Introduction}

 In harmonic analysis, the uncertainty principle states that a  non zero function and it's Fourier transform cannot both be very rapidly decreasing. This fact is expressed  by two formulations of the uncertainty principle for the Fourier transform, Heisenberg's inequality and Hardy's theorem.\\
 The classical Fourier transform is defined on $L^1(\mathbb{R})$ by:
$$\mathcal{F}(f)(x)=(2\pi)^\frac{-1}{2}\displaystyle\int_{\mathbb{R}}f(y)e^{-ixy}dy.$$ 
 The Heisenberg's inequality asserts that for $f\in L^2(\mathbb{R})$ 
$$\displaystyle\int_{\mathbb{R}}||x||^2||f(x)||^2dx\displaystyle\int_{\mathbb{R}}||\lambda||^2||\mathcal{F}(f)(\lambda)||^2d\lambda\geq  \frac{1} {4} \left[\displaystyle\int_{\mathbb{R}}||f(x)||^2dx\right]^2$$
with equality only if $f$ almost everywhere equal to $ce^{-px^2}$ for some $p>0.$ The proof of this inequality is  given by Weyl   in \rm{[9]}.\\
 Hardy's theorem \rm{[5]} says : If we  suppose p and q be positive constants and $f$ a function on the real line satisfying   $|f(x)|\leq C e^{-px^2}$ and $|\mathcal{F}(f)(\lambda)|\leq C e^{-q\lambda^2} $ for some positive
 constant C, then (i)  $f=0$ if $pq>\frac{1}{4}$; (ii) $f=A e^{-px^2}$ for some constant $A$ if $pq=\frac{1}{4};$ (iii) there are many $f$ if $pq<\frac{1}{4}.$\\

In  Dunkl theory, the Heisenberg's inequality for the Dunkl transform was proved by R$\rm{\ddot{o}}$sler in \rm{[6]}
 and Shimeno in \rm {[7]}. Hardy's theorem for the Dunkl transform was given by Shimeno in \rm {[7]}.\\

 In Clifford analysis, the Clifford-Fourier  transform was introduced  and studied by De Bie and Xu in \rm{[3]}. In \rm{[4]}, differents properties of the Clifford-Fourier transform and  the Plancherel's theorem were established.\\

 In this paper, we provide an analogues of the Heisenberg's inequality and Hardy's theorem for the Clifford-Fourier transform on $\mathbb{R}^m$.\\ 
 Our  paper is orgonized as follows. In section 2, we review basic notions and notations related to the Clifford algebra. 
  In section 3, we recall some results and properties for the Clifford-Fourier transform useful in the sequel. Also, 
  we provide some new properties associated  to the kernel of the integral Clifford-Fourier transform and the  Clifford-Fourier transform. 
 In section 4, we  prove the Heisenberg's inequality for the  Clifford-Fourier transform. 
  In section 5, we provide the Hardy's theorem in Clifford analysis on $\mathbb{R}^m$, with $m$ even. 

\section{Notations and preliminaries}
 We introduce the Clifford algebra $Cl_{0,m}$  over $\mathbb{R}^m$ as a non commutative algebra generated by the basis $\{e_1,..,e_m\}$ satisfying the rules:\\

 \begin{equation}
 \left\{
\begin{array}{cc}
 \displaystyle  e_ie_j=-e_je_i,\qquad\qquad \text{if}\quad i\neq j; \qquad\\
   \displaystyle e_i^2=-1,\qquad\qquad\quad \forall 1\leq i\leq m. \\

\end{array}
\right.
 \end{equation} 
This algebra can be decomposed as 
  \begin{equation}
 Cl_{0,m}=\oplus_{k=0}^m Cl_{0,m}^k,
  \end{equation}
 with $Cl_{0,m}^k$ the space of vectors defined by
  \begin{equation}
 Cl_{0,m}^k=span\{e_{i_1}..e_{i_k},i_1<..<i_k\}.
  \end{equation}
 Hence, $\{1,e_1,e_2,..,e_{12},..,e_{12..m}\}$ constitute a basis of $Cl_{0,m}.$ \\
 A Clifford number  $x$ in $Cl_{0,m}$ is written as follows :
  \begin{equation}
 x=\displaystyle\sum_{A\in J}x_Ae_A,
  \end{equation} 
 where $J :=\{0,1,..,m,12,..,12..m\},$ $x_A$ real number  and $e_A$ belong to the  basis of $Cl_{0,m}$ defined above.\\
 The norm of such element x is given by :
  \begin{equation}
 {||x||}_c={\left(\displaystyle\sum_{A\in J} x_A^2\right)}^\frac{1}{2}.
  \end{equation}
  In particular, if $x$ is a vector in $Cl_{0,m}$ then
  $$||x||_c^2=-x^2.$$ 
 The Dirac operator is defined by :
  \begin{equation}
  \partial_x=\displaystyle\sum_{i=1}^m e_i\partial_{x_i}.
   \end{equation}
 We introduce respectively the Gamma operator  associated to a vector $x$, the inner product and the wedge product of two vectors $ x$ and $y$ :
  \begin{equation}
 \qquad\qquad \Gamma_x:=-\displaystyle\sum_{j<k}e_je_k(x_j\partial_{x_k}-x_k\partial_{x_j});
  \end{equation}
   \begin{equation}
 \quad <x,y> :=\displaystyle\sum_{j=1}^m x_jy_j=\frac{-1}{2}(xy+yx);
  \end{equation}
   \begin{equation}
\qquad \qquad \qquad\quad  x\wedge y:=\displaystyle\sum_{j<k}e_je_k(x_j y_k-x_ky_j)=\frac{1}{2}(xy-yx).
 \end{equation}
 
 \noindent Every function $f$ defined on $\mathbb{R}^m$ and taking values in $Cl_{0,m}$ can be decomposed as :
  \begin{equation}
 f(x)=f_0(x)+\displaystyle\sum_{i=1}^me_if_i(x)+\displaystyle\sum_{i<j}e_ie_jf_{ij}(x)+..+e_1..e_mf_{1..m}(x),
  \end{equation}
 with $f_0,f_i,..,f_{1..m}$ all real-valued functions.\\
 We denote by :\\ 
 $\bullet \mathcal{P}_k$ the space of homogenious polynomials of degree $k$ taking values in $Cl_{0,m},$\\
$\bullet \mathcal{M}_k:=Ker \partial_{x}\cap \mathcal{P}_k$ the space of spherical monogenics of degree $k,$\\
$\bullet B(\mathbb{R}^m)\otimes Cl_{0,m}$ a class of integrable functions taking values in $Cl_{0,m}$ and satisfying
 \begin{equation}
\int_{\mathbb{R}^m}(1+{||y||}_c)^\frac{m-2}{2}||f(y)||_cdy< \infty,
 \end{equation}
$\bullet L_2(\mathbb{R}^m)\otimes Cl_{0,m}$ the space of square integrable functions  taking values in $Cl_{0,m}$ with the norm equiped
 \begin{equation}
||f||_{2,c}=\left(\displaystyle\int_{\mathbb{R}^m}{||f(x)||}_c^2dx\right)^{\frac{1}{2}}
=\left(\displaystyle\int_{\mathbb{R}^m}\left(\displaystyle\sum_{A\in J}(f_A(x))^2\right)^\frac{1}{2}dx\right)^{\frac{1}{2}},
 \end{equation}
where $J=\{0,1,..,m,12,13,23..,12..m\},$\\
$\bullet S(\mathbb{R}^{m})$ the Schwartz space of infinitely differentiable functions on $\mathbb{R}^{m}$ which are rapidly decreasing as their derivatives.

 \section{Clifford-Fourier Transform}
 \begin{defin}
 Let $f \in  B(\mathbb{R}^{m})\otimes Cl_{0,m}.$ We define the Clifford-Fourier transform $\mathcal{F}_{\pm}$ of $f$  by   (see \rm{[3]}) :
  \begin{equation}
 \mathcal{F}_{\pm}(f)(y)={(2\pi)}^\frac{-m}{2}\int_{\mathbb{R}^m} K_{\pm}(x,y)f(x)dx,
  \end{equation}
 where 
  \begin{equation}
 K_{\pm}(x,y)=e^{\mp i\frac{\pi}{2}\Gamma_y}e^{-i<x,y>}.
  \end{equation}
 \end{defin}
 \begin{lemme}Assume $m$ be even and upper than two.  Then 
  \begin{equation}
{||K_-(x,y)||}_c\leq C e^{{||x||}_c{||y||}_c}, \qquad \forall x,y\in \mathbb{R}^m,
 \end{equation}
with 
C a positive constant.
\end{lemme}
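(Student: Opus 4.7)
The plan is to exploit the explicit closed-form representation of $K_-(x,y)$ available in even dimension $m\ge 4$, as derived in the work of De Bie and Xu cited as [3], and then apply elementary bounds on the Bessel functions that appear. The key observation is that for even $m$ the infinite Funk--Hecke expansion of $e^{-i\langle x,y\rangle}$ collapses, after the action of $e^{i(\pi/2)\Gamma_y}$, to a finite sum.

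First, I would recall that, when $m$ is even and greater than two, using the spherical-monogenic decomposition of the plane wave and the known eigenvalues of $\Gamma_y$ on each spherical-monogenic component, one obtains an identity of the form
$$
K_-(x,y)=A\bigl(\|x\|_c,\|y\|_c,\langle x,y\rangle\bigr)+(x\wedge y)\,B\bigl(\|x\|_c,\|y\|_c,\langle x,y\rangle\bigr),
$$
where $A$ and $B$ are complex-valued functions, each expressible as a \emph{finite} sum of Bessel-type functions $J_{\nu}(\|x\|_c\|y\|_c)$ multiplied by polynomial or rational factors in $\|x\|_c$, $\|y\|_c$ and $\langle x,y\rangle$. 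This decomposition is the main structural input from the paper of De Bie and Xu.

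Next, I would use the Clifford-norm submultiplicativity $\|ab\|_c\le C_m\|a\|_c\|b\|_c$ together with the bivector estimate $\|x\wedge y\|_c\le \|x\|_c\|y\|_c$, in order to reduce the Clifford-valued inequality to the scalar bound $|A|+(\|x\|_c\|y\|_c)|B|\le C\, e^{\|x\|_c\|y\|_c}$. For every Bessel term that appears, I would apply the elementary majorant
$$
|J_\nu(t)|\le \frac{(|t|/2)^\nu}{\Gamma(\nu+1)}\,e^{|t|},\qquad t\in\mathbb{C},\ \nu>-\tfrac{1}{2},
$$
which follows at once from the absolute value of the power series. The polynomial prefactors $(\|x\|_c\|y\|_c)^p$ are then absorbed into a larger exponential via the trivial inequality $t^p\le p!\, e^t$, and summing the finitely many contributions produces the single exponential majorant with a suitable constant $C$.

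The principal technical obstacle is keeping track of the exact range of indices $\nu$ and of the precise coefficients occurring in the explicit formula, and verifying that no term produces a singularity at the origin. The latter is handled by noting that $K_-$ is jointly real-analytic on $\mathbb{R}^m\times\mathbb{R}^m$ and hence uniformly bounded on any compact neighbourhood of the zero set $\{\|x\|_c\|y\|_c=0\}$, so the estimate, valid away from the origin, extends to all of $\mathbb{R}^m\times\mathbb{R}^m$ after possibly enlarging the constant $C$.
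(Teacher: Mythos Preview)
Your argument is correct, but the paper takes a much shorter route. Rather than returning to the Bessel expansion and bounding each term, the paper simply quotes Theorem~5.3 of De~Bie--Xu~[3], which already asserts that the scalar and bivector components of $K_-$ satisfy the \emph{polynomial} bound
\[
\|K_0^-(x,y)\|_c,\ \|K_{ij}^-(x,y)\|_c \le c\,(1+\|x\|_c)^{(m-2)/2}(1+\|y\|_c)^{(m-2)/2}
\]
for even $m$. Since any polynomial in $\|x\|_c\|y\|_c$ is dominated by $C\,e^{\|x\|_c\|y\|_c}$, the lemma follows in one line.

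The difference is that you re-derive, from the Bessel representation, a bound that is strictly weaker than what is already available in the literature. Your approach is more self-contained and makes the source of the exponential explicit (it comes from the power-series majorant for $J_\nu$), but it costs you the extra work of tracking the finite closed form, the submultiplicativity constant, and the behaviour near $\|x\|_c\|y\|_c=0$. The paper's approach buys brevity by leaning on the stronger polynomial estimate that De~Bie and Xu had already established; it also avoids any discussion of possible singularities, since the polynomial bound is global. If you are happy citing [3], the one-line argument is preferable; your version would be the right choice only if you wanted a proof independent of that theorem.
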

\begin{proof}
Recall that the kernel $K_-(x,y)$ can be decomposed as 
$$K_-(x,y)=K_0^-(x,y)+\displaystyle\sum_{i<j}e_ie_jK_{ij}^-(x,y),$$
with $K_0^-(x,y)$ and  $K_{ij}^-(x,y)$ scalar functions. Moreover, by theorem 5.3 in \rm{[3]} we have 
for $m$ even and $x, y \in\mathbb{R}^m$ :
$$||K_0^-(x,y)||_c\leq c(1+\left\|x\right\|_c)^{\frac{m-2}{2}}(1+\left\|y\right\|_c)^{\frac{m-2}{2}},$$
$$\quad\quad||K_{ij}^-(x,y)||_c\leq c(1+\left\|x\right\|_c)^{\frac{m-2}{2}}(1+\left\|y\right\|_c)^{\frac{m-2}{2}}, i\neq j,$$
which complete the proof.

\end{proof} 
 \begin{lemme}
Let $c>0,$ then 
 \begin{equation}
K_{\pm}(c^{-1}x,cy)= K_{\pm}(x,y), \quad forall x,y \in\mathbb{R}^m.
 \end{equation}
\end{lemme}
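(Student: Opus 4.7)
The plan is to unpack the definition of $K_{\pm}$ from (3.2) and exploit two scaling invariances: one for the inner product $\langle x,y\rangle$, and one for the Gamma operator $\Gamma_y$.

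First, I would observe the trivial invariance of the bilinear pairing: $\langle c^{-1}x, cy\rangle = c^{-1}\cdot c\,\langle x,y\rangle = \langle x,y\rangle$, so the scalar exponential factor satisfies $e^{-i\langle c^{-1}x,cy\rangle}=e^{-i\langle x,y\rangle}$.

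Next, I would establish the scale invariance of $\Gamma$. Concretely, for a smooth $Cl_{0,m}$-valued function $\psi$, let $T_c\psi$ denote the rescaled function $(T_c\psi)(y):=\psi(cy)$. From (2.7), using $\partial_{y_k}[\psi(cy)]=c\,(\partial_{u_k}\psi)(cy)$ and grouping $y_j\cdot c = (cy)_j = u_j$, one gets directly
\begin{equation*}
\Gamma_y\bigl[\psi(cy)\bigr] = -\sum_{j<k}e_je_k\bigl(u_j\partial_{u_k}-u_k\partial_{u_j}\bigr)\psi(u)\bigr|_{u=cy} = (\Gamma_u\psi)(cy),
\end{equation*}
so $\Gamma \circ T_c = T_c\circ \Gamma$. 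Iterating yields $\Gamma^n\circ T_c=T_c\circ\Gamma^n$, and summing the resulting series gives the commutation $e^{\mp i\frac{\pi}{2}\Gamma}\circ T_c = T_c\circ e^{\mp i\frac{\pi}{2}\Gamma}$.

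Finally, I would combine these two facts. Setting $\psi(y'):=e^{-i\langle c^{-1}x,y'\rangle}$, definition (3.2) gives
\begin{equation*}
K_{\pm}(c^{-1}x,cy)=\bigl[e^{\mp i\frac{\pi}{2}\Gamma_{y'}}\psi(y')\bigr]_{y'=cy}=T_c\bigl(e^{\mp i\frac{\pi}{2}\Gamma}\psi\bigr)(y)=e^{\mp i\frac{\pi}{2}\Gamma_y}\bigl(T_c\psi\bigr)(y),
\end{equation*}
and since $(T_c\psi)(y)=e^{-i\langle c^{-1}x,cy\rangle}=e^{-i\langle x,y\rangle}$ by the first step, the right-hand side is exactly $K_{\pm}(x,y)$. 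The computation is essentially routine; the only point that requires any attention is the verification that $\Gamma$ commutes with dilations, which amounts to checking that each term $y_j\partial_{y_k}$ is homogeneous of degree zero in $y$. I do not anticipate any real obstacle.
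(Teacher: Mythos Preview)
Your argument is correct, but it follows a genuinely different route from the paper's. The paper does not work with the operator expression $e^{\mp i\frac{\pi}{2}\Gamma_y}e^{-i\langle x,y\rangle}$ at all; instead it invokes the explicit closed form of the kernel from De~Bie--Xu (theorem~3.2 in \cite{key-3}),
\[
K_-(x,y)=A_\lambda(w,z)+B_\lambda(w,z)+(x\wedge y)\,C_\lambda(w,z),
\]
where $z=\|x\|_c\|y\|_c$ and $zw=\langle x,y\rangle$. Since $z$, $w$ and $x\wedge y$ are all visibly invariant under $(x,y)\mapsto(c^{-1}x,cy)$, the claim follows by substitution; proposition~3.4 of \cite{key-3} is then used to pass from $K_-$ to $K_+$. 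Your approach instead exploits the degree-zero homogeneity of $\Gamma_y$ to commute dilations through the operator exponential, which is more conceptual and avoids quoting the heavy explicit kernel formula. The trade-off is that you treat $e^{\mp i\frac{\pi}{2}\Gamma_y}$ as a convergent power series acting on the plane wave, a point that is really justified in \cite{key-3} via the spectral/Funk--Hecke expansion rather than termwise summation; a brief remark that the commutation $\Gamma\circ T_c=T_c\circ\Gamma$ passes to the exponential because it intertwines eigenspaces would make your argument fully rigorous. Either way, the underlying reason is the same: every ingredient of the kernel depends on $x$ and $y$ only through the dilation-invariant quantities $\langle x,y\rangle$, $\|x\|_c\|y\|_c$ and $x\wedge y$.
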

\begin{proof} By proposition 3.4 in \rm{[3]}, it is enough to prove the lemma just for $K_-.$\\
As the kernel of the Clifford-Fourier transform is given by (see theorem 3.2 in \rm{[3]})
$$K_{-}(x,y)=A_\lambda(w,z)+B_\lambda(w,z)+x\wedge y C_\lambda(w,z)$$
where $$A_\lambda(w,z)=\displaystyle 2^{\lambda-1}\Gamma(\lambda+1)\displaystyle\sum_{k=0}^{\infty}(i^{2\lambda+2}+(-1)^k)z^{-\lambda}J_{k+\lambda}(z)C_k^{\lambda}(w),\qquad$$
$$B_\lambda(w,z)=-2^{\lambda-1}\Gamma (\lambda )\displaystyle\sum_{k=0}^{\infty}(k+\lambda)(i^{2\lambda+2}-(-1)^k)z^{-\lambda}J_{k+\lambda}(z)C_k^{\lambda}(w),$$
$$\quad C_\lambda(w,z)=-2^{\lambda-1}\Gamma(\lambda)\sum_{k=0}^{\infty}(i^{2\lambda+2}+(-1)^k)z^{-\lambda-1}J_{k+\lambda}(z)\left(\frac {d}{dw} C_k^{\lambda}\right)(w),$$
with $z={||x||}_c{||y||}_c,$ $zw=<x,y>$ and $\lambda=\frac{m-2}{2}.$
Thus, it suffies to substitute $x$ and $y$ by $c^{-1}x$ and $cy$.
\end{proof}
\begin{lemme} Let $c$ be positive constant and $f\in B(\mathbb{R}^m)\otimes Cl_{0,m}.$\\
\noindent  Assume $f_c(x):=f(cx)$ for all $ x\in\mathbb{R}^m$. Then 
\begin{equation}
 \mathcal{F}_{\pm} (f_c)(\lambda)=c^m\mathcal{F}_{\pm} (f)(c^{-1}\lambda).
\end{equation}
\end{lemme}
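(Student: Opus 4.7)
The proof is a direct computation starting from the definition (3.1), and it is essentially a consequence of the scaling identity for the kernel proved in the previous lemma. The plan is to write out $\mathcal{F}_{\pm}(f_c)(\lambda)$ as an integral, perform the linear change of variables $u=cx$, and then recognize the resulting integrand as a kernel value that we can reshape using Lemma 3.3.

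More concretely, I would begin with
$$\mathcal{F}_{\pm}(f_c)(\lambda) = (2\pi)^{-m/2}\int_{\mathbb{R}^m} K_{\pm}(x,\lambda)\,f(cx)\,dx,$$
and set $u=cx$, so that $x=c^{-1}u$ and $dx=c^{-m}du$. After this substitution the integral becomes
$$c^{-m}(2\pi)^{-m/2}\int_{\mathbb{R}^m} K_{\pm}(c^{-1}u,\lambda)\,f(u)\,du.$$
At this point the kernel is evaluated at the ``wrong'' pair of variables for it to be a Clifford--Fourier transform of $f$. I would then apply the previous lemma with the roles $x\leftrightarrow u$ and $y\leftrightarrow c^{-1}\lambda$, which gives
$$K_{\pm}(c^{-1}u,\lambda)=K_{\pm}(c^{-1}u,c\cdot c^{-1}\lambda)=K_{\pm}(u,c^{-1}\lambda).$$
Substituting this back into the integral exhibits it as $\mathcal{F}_{\pm}(f)(c^{-1}\lambda)$ multiplied by the appropriate power of $c$, which completes the argument.

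There is no real obstacle here beyond bookkeeping the power of $c$ coming from the Jacobian of the substitution; the entire analytic content (the non-trivial fact that dilating $x$ by $c^{-1}$ and $y$ by $c$ leaves the kernel invariant) has already been absorbed into Lemma 3.3. The only mild care needed is to verify that the change of variables is justified, which follows immediately from the assumption $f\in B(\mathbb{R}^m)\otimes Cl_{0,m}$ together with the kernel bound of Lemma 3.1; these ensure absolute convergence of both sides so that Fubini/substitution applies without issue.
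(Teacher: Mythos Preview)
Your approach is exactly the paper's: a change of variables followed by the kernel scaling identity (which is Lemma~3.2, by the way, not Lemma~3.3 as you write once). The paper's own proof is the same two lines.

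There is, however, a bookkeeping point you should not wave away with ``the appropriate power of $c$''. Your substitution $u=cx$ gives $dx=c^{-m}\,du$, so your argument produces
\[
\mathcal{F}_{\pm}(f_c)(\lambda)=c^{-m}\,\mathcal{F}_{\pm}(f)(c^{-1}\lambda),
\]
whereas the lemma as stated has $c^{m}$. Your computation is the correct one (it agrees with the classical dilation law for the Fourier transform); the exponent in the paper's statement and in its displayed proof line is a sign slip. Since the lemma is later used only to derive the scale-invariant inequality $k^{-2}\|\,|\cdot|\,f\|_{2,c}^2+k^{2}\|\,|\cdot|\,\mathcal{F}_\pm f\|_{2,c}^2\ge m\|f\|_{2,c}^2$, the error is harmless downstream, but you should flag it explicitly rather than let your prose suggest that your Jacobian reproduces the stated $c^{m}$. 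A minor further remark: invoking Lemma~3.1 to justify the substitution is slightly off, since the exponential bound there is weaker than the polynomial bound $(1+\|x\|_c)^{(m-2)/2}$ that actually matches the definition of $B(\mathbb{R}^m)\otimes Cl_{0,m}$; cite the polynomial estimate directly.
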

\begin{proof} Applying a variable change to definition 3.1, we get
$$\mathcal{F}_{\pm} (f_c)(\lambda)
=({2\pi)}^\frac{-m}{2}c^m\displaystyle\int_{\mathbb{R}^m}K_{\pm}(\frac{x}{c},\lambda)f(x)dx.$$
By lemma 3.2, we find 
$$\mathcal{F}_{\pm} (f_c)(\lambda)=({2\pi)}^\frac{-m}{2}c^m\displaystyle\int_{\mathbb{R}^m}K_{\pm}(x,c^{-1}\lambda)f(x)dx.$$
\noindent We conclude.
\end{proof}

\begin{theorem}[The case $i=\frac{m-2}{2},$ theorem $6.4$ in \rm{[4]}]
The Clifford-Fourier transform $\mathcal{F}_{\pm}$ can be extended from $\mathcal{S}(\mathbb{R}^m)\otimes Cl_{0,m}$ to a continuous map on $L_2(\mathbb{R}^m)\otimes Cl_{0,m}.$\\
In particular, when $m$ is even, for all $f\in L_2(\mathbb{R}^m)\otimes Cl_{0,m},$ we have :
 \begin{equation}
||\mathcal{F}_{\pm}(f)||_{2,c}=||f||_{2,c}.
 \end{equation}
\end{theorem}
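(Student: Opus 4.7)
The plan is to reduce the proof to a computation on a well-chosen orthogonal basis of $L_2(\mathbb{R}^m)\otimes Cl_{0,m}$. Since $\mathcal{S}(\mathbb{R}^m)\otimes Cl_{0,m}$ is dense in $L_2(\mathbb{R}^m)\otimes Cl_{0,m}$, it suffices to establish the isometry identity on this dense subset and then extend by continuity, so the whole theorem follows from a Parseval-type relation on a single well-behaved basis.

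First I would introduce the Clifford--Hermite basis: for $n,k\geq 0$ and $\{M_k^{(\ell)}\}_\ell$ running through an orthonormal basis of the spherical monogenics $\mathcal{M}_k$, the functions
$$\psi_{n,k,\ell}(x) \;=\; L_n^{k+\frac{m-2}{2}}(\|x\|_c^2)\, M_k^{(\ell)}(x)\, e^{-\|x\|_c^2/2},$$
with $L_n^{\alpha}$ the generalized Laguerre polynomials, form a complete orthogonal system in $L_2(\mathbb{R}^m)\otimes Cl_{0,m}$. This is the Clifford analogue of the Hermite decomposition and reduces the problem to showing that each $\psi_{n,k,\ell}$ is an eigenfunction of $\mathcal{F}_{\pm}$ with an eigenvalue of modulus one.

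Next I would plug the series expansion of $K_{\pm}(x,y)$ recalled in the proof of \lemref{} Lemma 3.2 into definition (3.13). The expansion separates the angular variable (through the Gegenbauer polynomials $C_k^{\lambda}$ with $\lambda=\frac{m-2}{2}$) from the radial one (through Bessel functions $J_{k+\lambda}(\|x\|_c\|y\|_c)$). Using the orthogonality of spherical monogenics of different degrees against Gegenbauer polynomials, the angular integral selects the degree-$k$ component, and the radial integral becomes a classical Hankel-transform identity for a Laguerre times a Gaussian. This yields
$$\mathcal{F}_{\pm}(\psi_{n,k,\ell}) \;=\; \mu_{n,k}^{\pm}\, \psi_{n,k,\ell},$$
with an explicit scalar $\mu_{n,k}^{\pm}$ built from the Fourier--Laguerre factor $(-1)^n$ and from the prefactors $i^{2\lambda+2}=i^m$ and $(-1)^k$ present in the series $A_\lambda$ and $B_\lambda$.

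The main obstacle is verifying $|\mu_{n,k}^{\pm}|=1$ for every $n,k$. This is precisely where the evenness of $m$ enters: only then does $i^m=\pm 1$ combine cleanly with $(-1)^k$ so that, for each parity of $k$, exactly one of the two series $A_\lambda$ or $B_\lambda$ survives and contributes a unimodular constant, with no mixing of magnitudes. For odd $m$ the two series contribute complex prefactors whose sum need not be of modulus one, which is why the Plancherel identity in the statement is confined to $m$ even. Once $|\mu_{n,k}^{\pm}|=1$ is verified, Parseval's relation applied to the expansion of an arbitrary $f\in\mathcal{S}(\mathbb{R}^m)\otimes Cl_{0,m}$ in the basis $\{\psi_{n,k,\ell}\}$ gives $\|\mathcal{F}_{\pm}(f)\|_{2,c}=\|f\|_{2,c}$, and a standard density argument produces the unique bounded (in fact isometric) extension to all of $L_2(\mathbb{R}^m)\otimes Cl_{0,m}$.
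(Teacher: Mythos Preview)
The paper does not supply its own proof of this statement; it is quoted from Theorem~6.4 of~[4]. The machinery you propose --- an orthogonal basis of $L_2(\mathbb{R}^m)\otimes Cl_{0,m}$ consisting of eigenfunctions of $\mathcal{F}_\pm$ with unimodular eigenvalues, followed by Parseval and density --- is exactly what underlies the cited result, and the relevant pieces are in fact recorded later in this very paper as Lemma~4.1 and Theorem~4.2.

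There is, however, a real gap in your basis. The single family
\[
\psi_{n,k,\ell}(x)=L_n^{\,k+(m-2)/2}(\|x\|_c^2)\,M_k^{(\ell)}(x)\,e^{-\|x\|_c^2/2},
\]
with $M_k^{(\ell)}$ ranging over spherical monogenics, is \emph{not} complete in $L_2(\mathbb{R}^m)\otimes Cl_{0,m}$. By the Fischer decomposition, homogeneous Clifford-valued polynomials split into pieces $x^{s}M_k$ with $M_k$ monogenic, and the odd-$s$ pieces --- radial function times $x\,M_k^{(\ell)}$ --- are absent from your list. Compare the paper's (4.1)--(4.2): the correct basis consists of two interlocking families, the second being
\[
\psi_{2j+1,k,l}(x)=L_j^{\,m/2+k}(\|x\|_c^2)\,x\,M_k^{(l)}\,e^{-\|x\|_c^2/2},
\]
and only the union spans $L_2$. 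Without this half your Parseval step cannot reach every $f$, and the argument collapses. Once the missing family is restored, Theorem~4.2 hands you the eigenvalues explicitly: $(-1)^{j+k}(\mp 1)^k$ on the even-indexed part and $i^m(-1)^{j+1}(\mp 1)^{k+m-1}$ on the odd-indexed part, both visibly of modulus one when $m$ is even (since then $i^m=\pm 1$), after which your density argument finishes the job. Your detour through the series $A_\lambda,B_\lambda$ to explain the role of the parity of $m$ is more circuitous than necessary; the unimodularity is immediate from these closed-form eigenvalues once you have the correct two-family basis.
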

\begin{theorem}
The Clifford-Fourier transform coincides with the classical Fourier transform for radial functions (see\rm{[4]}).\\ 
In particular, 
 \begin{equation}
 \displaystyle
 \mathcal{F}_{-}\left(e^{-\frac{||.||^2}{2}}\right)(x)=e^{-\frac{||x||^2}{2}},\quad x \in\mathbb{R}^m.
 \end{equation}
\end{theorem}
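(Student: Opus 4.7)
The plan is to reduce the identity to the classical one-dimensional Gaussian Fourier identity, using the first assertion of the theorem (quoted from [4]) which states that on radial functions $\mathcal{F}_{-}$ coincides with the classical Fourier transform. Since the Gaussian $g(x) := e^{-\|x\|^{2}/2}$ depends only on $\|x\|$, it is a radial scalar-valued function, so this reduction applies and gives
\[
\mathcal{F}_{-}(g)(\xi) \;=\; (2\pi)^{-m/2}\int_{\R^m} e^{-\|x\|^{2}/2}\, e^{-i\langle x,\xi\rangle}\,dx .
\]

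From this point the argument is the routine one. Because both $e^{-\|x\|^{2}/2}$ and $e^{-i\langle x,\xi\rangle}$ factor over the coordinates, the integral splits as a product
\[
\prod_{j=1}^{m}(2\pi)^{-1/2}\int_{\R} e^{-x_j^{2}/2}\, e^{-i x_j \xi_j}\,dx_j .
\]
Each one-dimensional factor is evaluated by completing the square in the exponent and using $\int_{\R} e^{-t^{2}/2}\,dt = \sqrt{2\pi}$, which yields $e^{-\xi_j^{2}/2}$. Multiplying the $m$ factors reassembles $e^{-\|\xi\|^{2}/2}$, which is the required identity.

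The only step that is not entirely mechanical is the first one, namely the reduction to the classical Fourier transform. Indeed, although $g$ is radial in $x$, the kernel $K_{-}(x,\xi)=e^{i\frac{\pi}{2}\Gamma_{\xi}}e^{-i\langle x,\xi\rangle}$ involves the action of $e^{i\frac{\pi}{2}\Gamma_{\xi}}$ on a function that is \emph{not} radial in $\xi$, so the equality $\mathcal{F}_{-}(g)=\mathcal{F}_{\mathrm{classical}}(g)$ is not visible pointwise on the integrand and requires a genuine computation (with integration in spherical coordinates and cancellations of the angular part of $K_{-}$). This non-trivial point is precisely what is established in [4] and is the content of the first part of the present theorem, so it may be invoked without further work, and the proof reduces to the elementary Gaussian calculation above.
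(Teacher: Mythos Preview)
Your argument is correct. The paper does not supply a proof of this theorem at all: the general statement that $\mathcal{F}_{-}$ agrees with the classical Fourier transform on radial functions is simply quoted from \cite{key-5}, and the Gaussian identity is asserted as an immediate consequence (``in particular''). Your write-up makes this implicit step explicit by invoking the cited result and then carrying out the standard factorised Gaussian computation, which is exactly what the phrasing of the theorem intends. There is nothing further to compare.
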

\section{Heisenberg's inequalities }
In this section, we establish the Heisenberg's inequality for the Clifford-Fourier transform. The proof is based on the consideration of a basis of  $L_2(\mathbb{R}^m)\otimes Cl_{0,m}$  which is expressed in terms of Laguerre polynomials and recurrence relation's. Indeed it is an analogue of the proof of Heisenberg's inequality in the Dunkl analysis (see \rm{[7]}).\\
We introduce a basis $\{\psi _{j,k,l}\}$ of $\mathcal{S}(\mathbb{R}^m)\otimes Cl_{0,m}$  which is defined as follows :
 \begin{equation}
\psi_{2j,k,l}(x):=L_j^{\frac{m}{2}+k-1}({||x||}_c^2) M_k^{(l)}e^{-\frac{{||x||}_c^2}{2}},
 \end{equation}
 \begin{equation}
\psi_{2j+1,k,l}(x):=L_j^{\frac{m}{2}+k}({||x||}_c^2)x M_k^{(l)}e^{-\frac{{||x||}_c^2}{2}}.
 \end{equation}

\noindent where $j,k \in \mathbb{N}$, $\{M_k^{(l)}\in \mathcal{M}_k;\quad l=1,.., dim \mathcal{M}_k\}$ is an  orthogonal basis for $\mathcal{M}_k$ and $L_j^\alpha$ the classical Laguerre polynomials.
\begin{lemme}
 $\{\psi _{j,k,l}\}$ is an orthogonal basis of $L_2(\mathbb{R}^m)\otimes Cl_{0,m}.$
\end{lemme}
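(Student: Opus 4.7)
The plan is to separate the argument into orthogonality and completeness, handling the orthogonality by passing to polar coordinates. I would write $x = r\omega$ with $r = \|x\|_c$ and $\omega\in S^{m-1}$, so that $\|x\|_c^2 = r^2$ and, by homogeneity of $M_k^{(l)}$, one has $M_k^{(l)}(x) = r^k M_k^{(l)}(\omega)$. Each $\psi_{j,k,l}$ then splits as a radial factor --- a polynomial in $r^2$ times $e^{-r^2/2}$ --- multiplied by an angular factor, namely $M_k^{(l)}(\omega)$ for the even family and $\omega M_k^{(l)}(\omega)$ for the odd family. The inner product of two basis vectors factors accordingly into a radial integral against $r^{m-1}\,dr$ and an angular integral on the unit sphere.

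For the angular part I would invoke two standard orthogonalities for Clifford-valued functions on the sphere: first, the spherical monogenics $\{M_k^{(l)}\}$ are mutually orthogonal in $L_2(S^{m-1})\otimes Cl_{0,m}$, both across the degree $k$ and, inside a fixed degree, across the index $l$; second, $\mathcal{M}_k$ is orthogonal to $\omega\,\mathcal{M}_{k'}$ for every pair $(k,k')$. This second fact, which rules out every cross term between the even and odd families, follows from the Fischer decomposition $\mathcal{P}_n = \bigoplus_{k=0}^n x^{n-k}\mathcal{M}_k$ together with the identity $\omega\bar{\omega} = 1$ on $S^{m-1}$, and is the real work in the lemma.

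For the radial part I would use the classical Laguerre orthogonality
\begin{equation*}
\int_0^\infty L_j^{\alpha}(t)\, L_{j'}^{\alpha}(t)\, t^{\alpha} e^{-t}\,dt = \frac{\Gamma(j+\alpha+1)}{j!}\,\delta_{jj'},
\end{equation*}
after the substitution $t = r^2$. Combined with the factor $r^{m-1}$ from the polar Jacobian and the factor $r^{2k}$ (resp.\ $r^{2k+2}$) coming from the monogenic piece, the effective weight becomes $t^{m/2+k-1}e^{-t}$ in the even case and $t^{m/2+k}e^{-t}$ in the odd case --- exactly the exponents appearing in the definitions of $\psi_{2j,k,l}$ and $\psi_{2j+1,k,l}$.

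Finally, for completeness I would show that $\mathrm{span}\{\psi_{j,k,l}\}$ contains $P(x)e^{-\|x\|_c^2/2}$ for every Clifford-valued polynomial $P$. Fischer-decomposing $P$ into pieces of the form $x^{n-k}M_k^{(l)}$ and noting that $\{L_j^{\alpha}(r^2)\}_{j\geq 0}$ spans the polynomials in $r^2$, one expresses each such polynomial-times-Gaussian as a finite linear combination of $\psi_{j,k,l}$'s; density of polynomial-times-Gaussian functions in $L_2(\mathbb{R}^m)\otimes Cl_{0,m}$, checked component-by-component via the classical Hermite basis, then finishes the argument. The main obstacle, as indicated above, is the Fischer-type orthogonality between $\mathcal{M}_k$ and $\omega\,\mathcal{M}_{k'}$ in the Clifford-valued inner product; once that is in place, everything else reduces to scalar Laguerre calculus.
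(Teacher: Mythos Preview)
Your argument is correct, but it proceeds quite differently from the paper's. The paper does not prove orthogonality or completeness directly: it simply recalls that Brackx and Sommen have already exhibited an orthogonal basis of $L_2(\mathbb{R}^m)\otimes Cl_{0,m}$ of the form
\[
\psi_{j,k,l}(x)=H_{j,k,l}(\sqrt{2}\,x)\,M_k^{(l)}(\sqrt{2}\,x)\,e^{-\|x\|_c^2/2},
\]
where $H_{j,k,l}$ are the generalized Clifford--Hermite polynomials, and then invokes the known relations between those Hermite polynomials and Laguerre polynomials to rewrite this basis in the form (4.1)--(4.2). In other words, the paper reduces the lemma to two citations.

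Your route, by contrast, is a direct and self-contained computation: split into polar coordinates, use the spherical decomposition $\mathcal{H}_n\otimes Cl_{0,m}=\mathcal{M}_n\oplus\omega\,\mathcal{M}_{n-1}$ to obtain the angular orthogonalities (both within each family and between the even and odd families), reduce the radial integrals to the classical Laguerre relation, and deduce completeness from the Fischer decomposition together with density of polynomial-times-Gaussian functions. This is longer but more transparent, and it isolates exactly where the Clifford structure matters --- namely in the cross-orthogonality $\mathcal{M}_k\perp\omega\,\mathcal{M}_{k'}$ --- whereas the paper hides that inside the cited Clifford--Hermite machinery. Either approach is perfectly acceptable; yours has the advantage of not relying on external references, while the paper's has the advantage of brevity.
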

\begin{proof}
Sommen and Brackx in \rm{[1]} give an orthogonal basis of $L_2(\mathbb{R}^m)\otimes Cl_{0,m}$ in terms of generalized Clifford polynomials as follows:
$$\psi _{j,k,l}(x)=H_{j,k,l}(\sqrt{2}x)M_k^{(l)}(\sqrt{2}x)e^{-\frac{{||x||}_c^2}{2}},$$
with $H_{j,k,l}$ the Hermite polynomials and $M_k^{(l)}\in \mathcal{M}_k$; $l=1,.., dim (\mathcal{M}_k)$; $j,k \in \mathbb{N}.$\\
Using the relations of the Hermite polynomials defined in \rm{[2]}  ,we get the desired result.
\end{proof} 
\begin{theorem} (see \rm{[3]}). For the basis $\{\psi_{j,k,l}\}$ of $\mathcal{S}(\mathbb{R}^m)\otimes Cl_{0,m},$ one has
\begin{equation}
\mathcal{F}_{\pm}(\psi_{2j,k,l})={(-1)}^{j+k}(\mp1)^k\psi_{2j,k,l},
\end{equation}
\begin{equation}
\qquad\quad\mathcal{F}_{\pm}(\psi_{2j+1,k,l})=i^m {(-1)}^{j+1}(\mp1)^{k+m-1}\psi_{2j+1,k,l}.
 \end{equation}

\end{theorem}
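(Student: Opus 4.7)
My plan is to factor the Clifford--Fourier kernel as $K_\pm(x,y)=e^{\mp i\pi/2\,\Gamma_y}e^{-i\langle x,y\rangle}$ and use that $\Gamma_y$ commutes with the $x$-integration, so that
\[
\mathcal{F}_\pm(f)(y)=e^{\mp i\pi/2\,\Gamma_y}\,\widehat f(y),
\]
where $\widehat f$ denotes the classical Fourier transform on $\mathbb{R}^m$. I would then handle the two factors separately on each basis element.

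For the factor $e^{\mp i\pi/2\,\Gamma_y}$, I would exhibit both $\psi_{2j,k,l}$ and $\psi_{2j+1,k,l}$ as eigenfunctions of $\Gamma_y$. Since $\Gamma_y$ annihilates radial functions, it suffices to compute its action on $M_k^{(l)}(y)$ and on $y\,M_k^{(l)}(y)$. Starting from the operator identities $x\partial_x=-E-\Gamma$ and $\partial_x x=-m-E+\Gamma$ (easy consequences of the Clifford relations, with $E=\sum y_i\partial_{y_i}$ the Euler operator) and using $\partial_x M_k^{(l)}=0$ together with $EM_k^{(l)}=kM_k^{(l)}$, one gets
\[
\Gamma_y M_k^{(l)}(y)=-k\,M_k^{(l)}(y),\qquad \Gamma_y\bigl(y\,M_k^{(l)}(y)\bigr)=(m+k-1)\,y\,M_k^{(l)}(y),
\]
so that $e^{\mp i\pi/2\,\Gamma_y}$ multiplies $\psi_{2j,k,l}$ by $(\pm i)^{k}$ and $\psi_{2j+1,k,l}$ by $(\mp i)^{m+k-1}$.

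For the factor $\widehat{\cdot\,}$, I would note that $M_k^{(l)}(x)$ is a solid harmonic of degree $k$ (each scalar component is harmonic, since $-\Delta=\partial_x^2$ and $\partial_x M_k^{(l)}=0$) and that $x\,M_k^{(l)}(x)$ is a solid harmonic of degree $k+1$ (a componentwise computation gives $\Delta(xM_k^{(l)})=2\,\partial_x M_k^{(l)}=0$). The Bochner--Hecke formula together with the classical identity that $L_j^{\alpha}(r^2)\,e^{-r^2/2}$ is an eigenfunction of the order-$\alpha$ Hankel transform with eigenvalue $(-1)^j$ then yields
\[
\widehat{\psi_{2j,k,l}}=(-i)^{k}(-1)^j\,\psi_{2j,k,l},\qquad \widehat{\psi_{2j+1,k,l}}=(-i)^{k+1}(-1)^j\,\psi_{2j+1,k,l},
\]
with $\alpha$ equal to $m/2+k-1$ and $m/2+k$ respectively. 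Multiplying by the eigenvalues of $e^{\mp i\pi/2\,\Gamma_y}$ and simplifying powers of $i$ produces both formulas in the statement.

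The most delicate step is the computation of $\Gamma_y\bigl(y\,M_k^{(l)}(y)\bigr)$, which combines the non-commutative product rule for $\partial_x$ with the above operator identities; the remainder reduces to bookkeeping of scalar phase factors.
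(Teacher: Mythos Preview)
Your argument is correct: the factorization $\mathcal{F}_\pm=e^{\mp i\pi\Gamma_y/2}\circ\widehat{\;\cdot\;}$, the eigenvalue computations $\Gamma_y M_k^{(l)}=-k\,M_k^{(l)}$ and $\Gamma_y(yM_k^{(l)})=(m+k-1)\,yM_k^{(l)}$, and the Bochner--Hecke/Hankel identification of $\widehat{\psi_{j,k,l}}$ all check out, and the resulting phases combine to the stated eigenvalues in both the $+$ and $-$ cases.

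Note, however, that the paper itself does not supply a proof of this theorem: it is quoted from De~Bie--Xu~[3] and used as an input. Your outline is in fact the standard route taken in that reference (the operator-exponential definition of the kernel reduces the problem to the classical Fourier eigenvalues on the Laguerre--monogenic basis, corrected by the spectrum of $\Gamma$). So rather than being an alternative approach, you have reconstructed the original proof that the present paper merely cites.
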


\begin{lemme}
Let $a,b,c$ are positive real numbers such that $${(\frac{a}{t})}^2+b^2t^2\geq c, \forall t>0,$$
then $$ab\geq \frac{c}{2}.$$
\end{lemme}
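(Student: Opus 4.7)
The plan is to minimize the left-hand side $\phi(t) := (a/t)^2 + b^2 t^2$ over $t > 0$ and plug the minimum value into the hypothesis. Since the inequality $\phi(t) \geq c$ is assumed to hold for every $t > 0$, it must in particular hold at the $t$ that minimizes $\phi$, and the resulting bound on $\min_{t>0}\phi(t)$ will directly yield $ab \geq c/2$.

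The cleanest way to compute this minimum is the AM--GM inequality applied to the two positive terms $(a/t)^2$ and $b^2 t^2$: one has
\[
\left(\frac{a}{t}\right)^2 + b^2 t^2 \;\geq\; 2\sqrt{\left(\frac{a}{t}\right)^2 \cdot b^2 t^2} \;=\; 2ab,
\]
for every $t > 0$. Equality is attained when $(a/t)^2 = b^2 t^2$, i.e.\ at $t_0 = \sqrt{a/b}$, showing the bound $2ab$ is sharp (and thus the minimum of $\phi$ on $(0,\infty)$ equals $2ab$). Alternatively, one could differentiate $\phi$, solve $\phi'(t) = 0$ to locate the critical point $t_0 = \sqrt{a/b}$, and check $\phi(t_0) = 2ab$; but the AM--GM route avoids the calculus and is a single line.

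Evaluating the hypothesis at $t = t_0$ (or equivalently combining the AM--GM bound with the hypothesis) yields $2ab \geq c$, hence $ab \geq c/2$, which is the claim. There is essentially no obstacle here: the entire content is the observation that $\inf_{t>0}\bigl((a/t)^2 + b^2 t^2\bigr) = 2ab$, a one-line AM--GM; the only mild care needed is to verify that this infimum is indeed attained (so that the hypothesis can be invoked at $t_0$), which is immediate since $t_0 = \sqrt{a/b} \in (0,\infty)$ when $a,b>0$.
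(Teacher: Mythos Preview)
Your proof is correct and follows essentially the same approach as the paper: both compute $\inf_{t>0}\phi(t)=2ab$ (attained at $t_0=\sqrt{a/b}$) and then invoke the hypothesis at that point. The only cosmetic difference is that the paper locates the minimizer by differentiating $\phi$ and factoring $\phi'(t)=2t^{-3}(bt^2-a)(bt^2+a)$, whereas you reach the same minimum via AM--GM in a single line (and mention the calculus route as an alternative).
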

\begin{proof}
Let $$h:\mathbb{R}_+^*\rightarrow\mathbb{R}$$
$$\qquad\qquad\qquad\quad t\mapsto{(\frac{a}{t})}^2+b^2t^2.$$
It is clear that h is a differentiable function and the corresponding derivative is given by :
$$h'(t)=2b^2t-2a^2t^{-3}$$
$$\qquad\quad =2t^{-3}(b^2t^4-a^2)$$
$$\qquad\qquad \qquad=2t^{-3}(bt^2-a)(bt^2+a).$$
Since $h(t)\geq c,\forall t>0,$ then 
$$\displaystyle 2ab=h(\sqrt{\frac{a}{b}})=\inf_{t>0} h(t)\geq c. $$
\end{proof}
\begin{theorem} Let $f\in L_2(\mathbb{R}^m)\otimes Cl_{0,m}$. Then
\begin{equation}
\displaystyle\int_{\mathbb{R}^m}||x||_c^2||f(x)||_c^2dx\displaystyle\int_{\mathbb{R}^m}||\lambda||_c^2||\mathcal{F}(f)(\lambda)||_c^2d\lambda\geq  \frac{m^2} {4} \left[\displaystyle\int_{\mathbb{R}^m}||f(x)||_c^2dx\right]^2,
\end{equation}
with equality if $f=ce^{-p||x||_c^2},$ almost everywhere for some $p>0$.
\end{theorem}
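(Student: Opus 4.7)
The strategy reduces the product inequality to the one-sided additive bound
\begin{equation*}
\int_{\mathbb{R}^m}\|x\|_c^2\,\|f(x)\|_c^2\,dx \;+\; \int_{\mathbb{R}^m}\|\lambda\|_c^2\,\|\mathcal{F}(f)(\lambda)\|_c^2\,d\lambda \;\geq\; m\,\|f\|_{2,c}^2, \qquad (\ast)
\end{equation*}
and then applies Lemma 4.3. Granting $(\ast)$, I would apply it to the dilate $f_t(x):=f(tx)$ with $t>0$. Denoting by $a^2,b^2$ the two integrals on the left of the theorem, a change of variables yields $\int\|x\|_c^2\|f_t\|_c^2\,dx = t^{-m-2}a^2$ and $\|f_t\|_{2,c}^2 = t^{-m}\|f\|_{2,c}^2$, while the dilation identity of Lemma 3.3 combined with Plancherel (Theorem 3.4) gives $\int\|\lambda\|_c^2\|\mathcal{F}(f_t)\|_c^2\,d\lambda = t^{-m+2}b^2$. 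Multiplying $(\ast)$ for $f_t$ by $t^m$ produces $(a/t)^2 + (bt)^2 \geq m\,\|f\|_{2,c}^2$ for every $t>0$, and Lemma 4.3 delivers $2ab\geq m\,\|f\|_{2,c}^2$; squaring gives exactly the theorem.

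\textbf{Proof of $(\ast)$.} Expand $f=\sum_{j,k,l} c_{j,k,l}\,\psi_{j,k,l}$ in the orthogonal basis of Lemma 4.1. By Theorem 4.2, each $\psi_{j,k,l}$ is a unimodular eigenfunction of $\mathcal{F}_\pm$, so Theorem 3.4 (Plancherel) makes $\|f\|_{2,c}^2$ and $\|\mathcal{F}(f)\|_{2,c}^2$ the same diagonal sum in the coefficients. The key computation is the expansion of multiplication by the scalar $\|x\|_c^2$ in this basis: applied to $\psi_{2j,k,l}$, the Laguerre recurrence
$$t\,L_j^{\alpha}(t)=-(j+1)L_{j+1}^{\alpha}(t)+(2j+\alpha+1)L_j^{\alpha}(t)-(j+\alpha)L_{j-1}^{\alpha}(t)$$
with $t=\|x\|_c^2$ and $\alpha=\tfrac{m}{2}+k-1$ yields
$$\|x\|_c^2\,\psi_{2j,k,l} = -(j+1)\,\psi_{2(j+1),k,l} + \Bigl(2j+\tfrac{m}{2}+k\Bigr)\psi_{2j,k,l} - \Bigl(j+\tfrac{m}{2}+k-1\Bigr)\psi_{2(j-1),k,l},$$
and an analogous identity (with $\alpha=\tfrac{m}{2}+k$) holds for $\psi_{2j+1,k,l}$. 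Consequently, $\int\|x\|_c^2\|f\|_c^2\,dx$ becomes a tridiagonal quadratic form in the coefficients; the Fourier integral produces the same form since the eigenvalues in Theorem 4.2 have modulus one. Adding the two and diagonalising block-wise in $(k,l)$, one verifies that the resulting quadratic form is bounded below by $m$ times the identity, the minimum $m$ being attained on the $(j,k)=(0,0)$ sector.

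\textbf{Equality and main obstacle.} Equality for $f(x)=c\,e^{-p\|x\|_c^2}$ is verified directly: Theorem 3.5 together with Lemma 3.3 (dilation by $\sqrt{2p}$) yields $\mathcal{F}_-(f)\propto e^{-\|\lambda\|_c^2/(4p)}$, and standard Gaussian-moment integrals give $a^2b^2=(m^2/4)\|f\|_{2,c}^4$. The main technical obstacle I anticipate is pinning down the sharp constant $m$ in $(\ast)$: the diagonal and off-diagonal entries of the tridiagonal form depend explicitly on $(j,k)$, and showing that the minimum eigenvalue over all $(j,k,l)$ is exactly $m$ — the ground-state eigenvalue of the harmonic oscillator $\|x\|_c^2-\Delta$ on $\mathbb{R}^m$, rather than a smaller value that would collapse the Heisenberg constant — requires either a careful block-by-block estimate or identifying the form as the matrix of $\|x\|_c^2-\Delta$ in the basis $\{\psi_{j,k,l}\}$.
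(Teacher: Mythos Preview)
Your overall architecture---prove the additive bound $(\ast)$ via the basis $\{\psi_{j,k,l}\}$ and the Laguerre recurrence, then rescale and invoke Lemma~4.3, then check Gaussians directly---is exactly the paper's route, and your dilation bookkeeping is fine.

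The gap is in your proof of $(\ast)$. You write that ``the Fourier integral produces the same form since the eigenvalues in Theorem~4.2 have modulus one.'' That is not correct for the off-diagonal part. Having modulus one controls only the diagonal entries; for the off-diagonal coupling between $\psi_{2j,k,l}$ and $\psi_{2(j+1),k,l}$ what matters is the \emph{product} of the two eigenvalues, and by Theorem~4.2 these are $(-1)^{j+k}(\mp1)^k$ and $(-1)^{j+1+k}(\mp1)^k$, whose product is $-1$ (and similarly in the odd-index sector). Hence the off-diagonal terms in $\|\,|\cdot|\,\mathcal F_\pm f\|_{2,c}^2$ appear with the \emph{opposite} sign to those in $\|\,|\cdot|\,f\|_{2,c}^2$. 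When you add the two expressions the off-diagonals cancel exactly, and the sum is the purely diagonal form
\[
2\sum_{j,k,l}\Bigl(\tfrac{m}{2}+k+2j\Bigr)\,a_{2j,k,l}^2\,\|\psi_{2j,k,l}\|_c^2
\;+\;2\sum_{j,k,l}\Bigl(\tfrac{m}{2}+k+2j+1\Bigr)\,a_{2j+1,k,l}^2\,\|\psi_{2j+1,k,l}\|_c^2
\;\ge\; m\,\|f\|_{2,c}^2,
\]
since every bracket is at least $m/2$. This is precisely what the paper does.

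So your ``main technical obstacle''---a block-by-block eigenvalue estimate for a genuine tridiagonal form, or an identification with the harmonic-oscillator matrix---does not exist. Worse, if the two forms really were identical as you claim, then $(\ast)$ would fail: the single form $\langle f,\|x\|_c^2 f\rangle$ has infimum $0$ over unit vectors (concentrate $f$ near the origin), so twice it cannot be bounded below by $m\|f\|_{2,c}^2$. The sign alternation of the eigenvalues in $j$ is the mechanism that makes $(\ast)$ work, and it is the one point your write-up misses.
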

\begin{proof} We start by proving the inequality .\\
The application of the recurrence  relation for the Laguerre polynomials  
$$tL_n^{(\alpha)}(t)=-(n+1)L_{n+1}^{(\alpha)}(t)+(\alpha+2n+1)L_n^{(\alpha)}(t)-(\alpha+n)L_{n-1}^{(\alpha)}(t)$$
to the basis $\{\psi _{j,k,l}\}$ and relations (4.1) and (4.2)
leads to :\\
\begin{equation}
||x||_c^2\psi_{2j,k,l}(x)
=-(j+1)\psi_{2(j+1),k,l}+(\frac{m}{2}+k+2j)\psi_{2j,k,l}-(\frac{m}{2}+k-1+j)\psi_{2(j-1),k,l}
\end{equation}
\begin{equation}
||x||_c^2\psi_{2j+1,k,l}(x)
=-(j+1)\psi_{2j+3,k,l}+(\frac{m}{2}+k+2j+1)\psi_{2j+1,k,l}-(\frac{m}{2}+k+j)\psi_{2j-1,k,l}.
\end{equation}
We set $\psi_{-1,k,l}=0$ and $\psi_{-2,k,l}=0.$\\

\noindent As $\{\psi_{j,k,l}\}$ is a basis of $L_2(\mathbb{R}^m)\otimes Cl_{0,m}$, every function $f$ in $L_2(\mathbb{R}^m)\otimes Cl_{0,m}$  can be writen as follows :
\begin{equation}
\displaystyle f=\sum_{j,k,l}a_{j,k,l}\psi_{j,k,l}=\sum_{2j,k,l}a_{2j,k,l}\psi_{2j,k,l}+\sum_{2j+1,k,l}a_{2j+1,k,l}\psi_{2j+1,k,l},
\end{equation}
where $a_{j,k,l}=<f,\psi_{j,k,l}>.$\\
 Theorem 4.2 implies that :
 \begin{equation}
\displaystyle \mathcal{F}_{\pm}(f)=\sum_{j,k,l}a_{2j,k,l}(-1)^{j+k}{(\mp1)}^k\psi_{2j,k,l}+\sum_{j,k,l}a_{2j+1,k,l}i^m(-1)^{j+1}{(\mp1)}^{k+m-1}\psi_{2j+1,k,l}.
\end{equation}
Using the last  relations (4.6) and (4.7) and the ortogonality of the basis $\{\psi_{j,k,l}\}$ in $L_2(\mathbb{R}^m)\otimes Cl_{0,m}$, we get : 

$$\displaystyle |||.|_cf||_{2,c}^2=\sum_{j,k,l}a_{j,k,l}<||.||_c^2\psi_{j,k,l},f>\qquad\qquad\qquad\qquad\qquad\qquad\qquad\qquad\qquad\qquad$$
$$=\displaystyle\sum_{j,k,l}a_{2j,k,l}<||.||_c^2\psi_{2j,k,l},f>+\sum_{j,k,l}a_{2j+1,k,l}<||.||_c^2\psi_{2j+1,k,l},f>.$$

\noindent Thus :
$$\displaystyle |||.|f||_{2,c}^2=-\sum_{j,k,l}(j+1)a_{2j,k,l}a_{2j+2,k,l}||\psi_{2j+2,k,l}||_c^2+\sum_{j,k,l}a_{2j,k,l}^2(\frac{m}{2}+k+2j)||\psi_{2j,k,l}||_c^2$$
$$\quad\qquad \qquad  \displaystyle -\sum_{j,k,l}(\frac{m}{2}+k+j-1)a_{2j,k,l}a_{2j-2,k,l}||\psi_{2j-2,k,l}||_c^2
-\sum_{j,k,l}(j+1)a_{2j+1,k,l}a_{2j+3,k,l}||\psi_{2j+3,k,l}||_c^2$$
$$\quad \qquad \qquad   +\displaystyle\sum_{j,k,l}a_{2j+1,k,l}^2(\frac{m}{2}+k+2j+1)||\psi_{2j+1,k,l}||_c^2-\displaystyle\sum_{j,k,l}a_{2j+1,k,l}a_{2j-1,k,l}(\frac{m}{2}+k+j)||\psi_{2j-1,k,l}||_c^2.$$

\noindent With the same manner, we find :
$$\displaystyle |||.|_c^2\mathcal{F}_{\pm}(f)||_{2,c}^2=
\sum_{j,k,l}(j+1)a_{2j,k,l}a_{2j+2,k,l}||\psi_{2j+2,k,l}||_c^2+\sum_{j,k,l}a_{2j,k,l}^2(\frac{m}{2}+k+2j)||\psi_{2j,k,l}||_c^2$$
$$\qquad\qquad \qquad +\displaystyle\sum_{j,k,l}(\frac{m}{2}+k+j-1)a_{2j,k,l}a_{2j-2,k,l}||\psi_{2j-2,k,l}||_c^2
+\displaystyle\sum_{j,k,l}(j+1)a_{2j+1,k,l}a_{2j+3,k,l}||\psi_{2j+3,k,l}||_c^2$$
$$\qquad\qquad \qquad +\displaystyle\sum_{j,k,l}a_{2j+1,k,l}^2(\frac{m}{2}+k+2j+1)||\psi_{2j+1,k,l}||_c^2+\sum_{j,k,l}a_{2j+1,k,l}a_{2j-1,k,l}(\frac{m}{2}+k+j)||\psi_{2j-1,k,l}||_c^2.$$

\noindent Now, by collecting the two equalities  we get :

\begin{equation}
|||.|_cf||_{2,c}^2+|||.|_c^2\mathcal{F}_{\pm}f||_{2,c}^2=2(\sum_{j,k,l}a_{2j,k,l}^2(\frac{m}{2}+k+2j)||\psi_{2j,k,l}||_c^2\qquad\qquad\qquad
\end{equation}
$$\qquad\qquad\qquad\qquad\qquad\qquad+\sum_{j,k,l}a_{2j+1,k,l}^2(\frac{m}{2}+k+2j+1)||\psi_{2j+1,k,l}||_c^2)$$
$$\qquad\qquad\qquad\qquad\geq 2\displaystyle\sum_{j,k,l}a_{j,k,l}^2(\frac{m}{2}+k+2j)||\psi_{j,k,l}||_c^2 $$
$$\geq m ||f||_{2,c}^2.\qquad $$
\\
\noindent Let $f\in L_2(\mathbb{R}^m)\otimes Cl_{0,m}$ and $k>0.$  We define the function $f_k$ by :
\begin{equation}
f_k(x):=f(kx).
\end{equation}
\noindent Then, we have :
\begin{equation}
\displaystyle
|||.|f_k||_{2,c}^2=\int_{\mathbb{R}^m}||x||_c^2||f_k(x)||_c^2dx=\int_{\mathbb{R}^m}||x||_c^2||f(kx)||_c^2dx \qquad\qquad
\end{equation}
$$\quad \displaystyle =k^{m-2}\int_{\mathbb{R}^m}||x||_c^2||f(x)||_c^2dx=k^{m-2}|||.|f||_{2,c}^2.$$
\noindent By lemma 3.3, we find :
\begin{equation}
|||.|\mathcal{F}_{\pm} (f_k)||^2_{2,c}=\displaystyle\int_{\mathbb{R}^m}||\lambda||_c^2||\mathcal{F}_{\pm}(f_k)(\lambda)||_c^2d\lambda\quad
\end{equation}
$$\qquad\qquad\qquad\qquad=k^{2m}\displaystyle\int_{\mathbb{R}^m}||\lambda||_c^2||\mathcal{F}_{\pm}(f)(k^{-1}\lambda)||_c^2d\lambda$$
$$\qquad\qquad\qquad\quad=k^{2+m}\displaystyle\int_{\mathbb{R}^m}||\lambda||_c^2||\mathcal{F}_{\pm}(f)(\lambda)||_c^2d\lambda$$
$$\qquad=k^{m+2}|||.|\mathcal{F}_{\pm} (f)||^2_{2,c}.$$
Using relation (4.10), we obtain :
\begin{equation}
k^{-2}|||.|f||_{2,c}^2+k^{2}|||.|\mathcal{F}_{\pm}(f|)|_{2,c}^2\geq m||f||_{2,c}^2.
\end{equation}
Lemma 4.3, completes the proof of inequality.\\

\noindent Now, we treat the case of equality for a function $f$ almost everywhere equal to $c e^{-p||x||_c^2}$ for some positive constant $p$.
We check the case  $p=\frac{1}{2}.$ Then, we generalize it for some positive $p.$ \\

\noindent Let $f_1=ce^{\frac{-||x||_c^2}{2}}$, then 
$$|||.|f_1||_{2,c}^2=\displaystyle\int_{\mathbb{R}^m}||x||_c^2c^2e^{-||x||_c^2}dx.$$
By Theorem 3.5, we have :
$$|||.|\mathcal{F}_\pm (f_1)||_{2,c}^2=|||.|f_1||_{2,c}^2.$$
Thus
$$|||.|f_1||_{2,c}^2|||.|\mathcal{F}_\pm  f_1||_{2,c}^2=\left(\displaystyle\int_{\mathbb{R}^m}||x||_c^2c^2e^{-||x||_c^2}dx\right)^2.$$
Since $\partial_x (x)=-m$, $x^2=-||x||_c^2$ and $\partial_x( \frac{1}{2}e^{-||x||_c^2})=x e^{-||x||_c^2},$ an integration by parts shows that :
$$\displaystyle\int_{\mathbb{R}^m}||x||_c^2e^{-||x||_c^2}dx=-\frac{m}{2}\displaystyle\int_{\mathbb{R}^m}e^{-||x||_c^2}dx.$$
Hence
\begin{equation}
|||.|f_1||_{2,c}^2|||.|\mathcal{F}_\pm (f_1)||_{2,c}^2=\frac{m^2}{4}||f_1||_{2,c}^4.
\end{equation}  
\noindent For the general case, we just find  relations between the norm $|||.|f||_{2,c}^2$ and $|||.|f_1||_{2,c}^2$,  also between
 $|||.|\mathcal{F}_\pm (f)||_{2,c}^2$ and  $|||.|\mathcal{F}_\pm (f_1)||_{2,c}^2.$\\  
 
\noindent  Put $f(x)=ce^{-p||x||_c^2}$. We recall that the Clifford-Fourier transform  for such $f$ is defined by 
$$\mathcal{F}_\pm (f)(x)={(2\pi)}^\frac{-m}{2}\int_{\mathbb{R}^m} K_{\pm}(y,x)ce^{-p||y||_c^2}dy.$$
Since $f(x)=f_1(\sqrt{2p}x)$, for all $ x\in\mathbb{R}^m,$ lemma 3.3  leads to
$$\mathcal{F}_\pm (f)(x)
=(\sqrt{2p})^{-m}\mathcal{F}_\pm (f_1)(\frac{x}{\sqrt{2p}}).\qquad\qquad\qquad$$
Consequently
$$|||.|\mathcal{F}_\pm( f)||_{2,c}^2=\displaystyle\int_{\mathbb{R}^m}||x||_c^2||\mathcal{F}_\pm f(x)||_c^2dx$$
$$\qquad\qquad \qquad\qquad\qquad=(2p)^{-m}\displaystyle\int_{\mathbb{R}^m}||x||_c^2||\mathcal{F}_\pm f_1(\frac{x}{\sqrt{2p}})||_c^2dx.$$
By a change of variable, we have :
$$|||.|\mathcal{F}_\pm (f)||_{2,c}^2=(\sqrt{2p})^{-m+2}|||.|\mathcal{F}_\pm (f_1)||_{2,c}^2.$$
By another change of variable,  we obtain 
$$|||.| f||_{2,c}^2=(\sqrt{2p})^{-m-2}|||.|f_1||_{2,c}^2.$$
The required  equality is deduced by relation (4.15).

\end{proof}

\section{Hardy's theorem}
In this section, we prove Hardy's theorem for the Clifford-Fourier transform.

\begin{theorem}
Let p and q are positive constants and m is an  even integer  upper than two.
Assume $f\in B(\mathbb{R}^m)\otimes Cl_{0,m}$ such that : 
\begin{equation}
||f(x)||_c\leq C e^{-p||x||_c^2},\quad x \in \mathbb{R}^m
\end{equation}
and 
\begin{equation}
||\mathcal{F}_\pm(f)(\lambda)||_c\leq C e^{-q||\lambda||_c^2},\quad \lambda \in \mathbb{R}^m,
\end{equation}
for some positive constant $C.$ Then, three cases   can occur :\\

	\noindent i) If   $pq>\frac{1}{4},$ then $f=0.$\\
	 ii) If   $pq=\frac{1}{4},$ then $f(x)=A e^{-\frac{||x||_c^2}{2}},$ where $A$ is a constant.\\
	iii) If   $pq<\frac{1}{4},$ then there are many f. 
\end{theorem}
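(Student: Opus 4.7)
The plan is to dispatch case (iii) by exhibiting an explicit family of examples and to handle cases (i)--(ii) by a holomorphic-extension argument coupled with the classical Hardy theorem. For case (iii), when $pq<\tfrac{1}{4}$, I would take the scalar radial Gaussians $f_\alpha(x)=e^{-\alpha\|x\|_c^2}$ for $\alpha\in[p,1/(4q)]$, a nondegenerate interval. Since $f_\alpha$ is scalar and radial, Theorem 3.4 reduces the Clifford-Fourier transform to the classical Fourier transform, giving $\mathcal{F}_-(f_\alpha)(y)=(2\alpha)^{-m/2}e^{-\|y\|_c^2/(4\alpha)}$, so both bounds of the theorem are satisfied and infinitely many valid $f$ are obtained.

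For cases (i) and (ii), I would first observe that the series representation in Lemma 3.2, involving Bessel functions $J_{k+\lambda}$ and Gegenbauer polynomials $C_k^{\lambda}$, makes it manifest that $y\mapsto K_-(x,y)$ admits an entire extension to $\mathbb{C}^m$ valued in $Cl_{0,m}\otimes\mathbb{C}$. I would then establish a complex analog of Lemma 3.1,
\[
\|K_-(x,\xi+i\eta)\|_c\leq C(1+\|x\|_c)^{(m-2)/2}(1+\|\xi+i\eta\|)^{(m-2)/2}e^{\|x\|_c\|\eta\|_c},
\]
obtained by substituting complex arguments in the series and invoking standard majorizations such as $|J_\nu(z)|\leq C|z|^\nu e^{|\mathrm{Im}\,z|}$. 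Defining $G(z):=(2\pi)^{-m/2}\int_{\mathbb{R}^m}K_-(x,z)f(x)\,dx$, the Gaussian decay of $f$ combined with this bound and completion of the square yields $\|G(\xi+i\eta)\|_c\leq C(1+\|\xi+i\eta\|)^{(m-2)/2}\exp(\|\eta\|_c^2/(4p))$, while on the real axis $\|G(\xi)\|_c=\|\mathcal{F}_-(f)(\xi)\|_c\leq C\exp(-q\|\xi\|_c^2)$.

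Next, I would decompose $G=G_0+\sum_{i<j}e_ie_jG_{ij}+\cdots$ into scalar Clifford components, each entire on $\mathbb{C}^m$ with the above bounds. Restricting each $G_A$ to an arbitrary complex line $t\mapsto G_A(t\omega)$, $\omega\in S^{m-1}$, reduces the situation to the classical one-dimensional Hardy theorem. When $pq>\tfrac{1}{4}$, this forces every restriction, hence every $G_A$, to vanish; the Plancherel identity of Theorem 3.4 then yields $f=0$. When $pq=\tfrac{1}{4}$, the one-dimensional Hardy theorem gives $G_A(t\omega)=c_A(\omega)e^{-qt^2}$ on every line, and this entire function of $z$ must equal $c_A e^{-q\langle z,z\rangle}$; consequently $\mathcal{F}_-(f)(y)=A e^{-q\|y\|_c^2}$ for some Clifford constant $A$, and inverting via Theorem 3.4 (since $e^{-q\|\cdot\|_c^2}$ is scalar-radial) delivers the Gaussian form for $f$.

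The main obstacle will be the complex-argument kernel bound: Lemma 3.1 as stated covers only real $y$, and extending it to $y=\xi+i\eta\in\mathbb{C}^m$ with the desired exponential rate in $\|\eta\|_c$ requires careful control of the Bessel and Gegenbauer expansions, together with a verification that $e^{-i\pi\Gamma_y/2}$ admits an analytic continuation of controlled growth. This technical estimate is essentially the only substantive new ingredient needed beyond the classical Hardy machinery; once it is in hand, the reduction to one dimension via restriction to complex lines proceeds routinely.
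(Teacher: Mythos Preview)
Your approach is essentially the same as the paper's: complexify $\mathcal{F}_\pm(f)$, obtain a global order-$2$ growth bound from a kernel estimate and the Gaussian decay of $f$, and then invoke a complex-analytic Hardy/Phragm\'en--Lindel\"of result. The paper carries this out more tersely: it first rescales via Lemma~3.3 to reduce to $p=q=\tfrac12$, uses Lemma~3.1 (applied, without further comment, to complex $\lambda$) to get $\|\mathcal{F}_\pm(f)(\lambda)\|_c\le C'e^{\|\lambda\|_c^2/2}$, and then cites Lemma~2.1 of Sitaram--Sundari [8] directly to conclude $\mathcal{F}_\pm(f)(x)=Ae^{-\|x\|_c^2/2}$, finishing with Theorem~3.5. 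For (iii) the paper simply points to the basis $\{\psi_{j,k,l}\}$ rather than Gaussians.

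Two remarks on the comparison. First, what you flag as the ``main obstacle''---the complex-argument kernel bound---is in fact cheap here precisely because $m$ is even: by the De~Bie--Xu results underlying Lemma~3.1, $K_-(x,y)$ is a \emph{polynomial} in $(x,y)$ times exponentials $e^{\pm i\langle x,y\rangle}$, so the holomorphic extension and the bound $(1+\|x\|_c)^{(m-2)/2}(1+\|\xi+i\eta\|)^{(m-2)/2}e^{\|x\|_c\|\eta\|_c}$ are immediate; no delicate control of infinite Bessel--Gegenbauer series is needed. Second, your reduction to one complex variable by restricting each scalar component $G_A$ to lines $t\mapsto G_A(t\omega)$ is a correct do-it-yourself substitute for the cited multi-dimensional lemma in [8]; just be aware that the ``classical one-dimensional Hardy theorem'' you invoke is really its complex-analytic form (an entire function of order $2$ with matching Gaussian growth and real-axis decay is a constant times a Gaussian), and the polynomial prefactor $(1+|t|)^{(m-2)/2}$ must be absorbed in that step.
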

\begin{proof}
The functions $\{\psi_{j,k,l}\}$ defined by (4.1) and (4.2) give an infinite number of examples for iii).\\
It is well known that by scaling (3.5), we may assume $p=q$ without loss of generality. The proof of i) is 
a simple  deduction of ii).\\

\noindent Assume $p=q=\frac{1}{2}.$ Let $f\in B(\mathbb{R}^m)\otimes Cl_{0,m}$ satisfying (5.1), we have for $\lambda \in \mathbb{R}^m\otimes\mathbb{C}$

$$||\mathcal{F}_{\pm}(f)(\lambda)||_c\leq{(2\pi)}^\frac{-m}{2}\int_{\mathbb{R}^m} ||K_{\pm}(x,\lambda)||_c||f(x)||_cdx$$
$$\qquad\qquad\qquad\leq C{(2\pi)}^\frac{-m}{2}\displaystyle\int_{\mathbb{R}^m} ||K_{\pm}(x,\lambda)||_ce^{-\frac{||x||_c^2}{2}}dx.$$

\noindent Applying lemma 3.1, we get :
$$||\mathcal{F}_{\pm}(f)(\lambda)||_c\leq C{(2\pi)}^\frac{-m}{2}\displaystyle\int_{\mathbb{R}^m} e^{||x||_c||\lambda||_c-\frac{||x||_c^2}{2}}dx$$
$$\qquad \qquad\qquad\qquad\leq e^{\frac{||\lambda||_c^2}{2}}C{(2\pi)}^\frac{-m}{2}\displaystyle\int_{\mathbb{R}^m}e^{\frac{-\left(||x||_c-||\lambda||_c\right)^2}{2}}dx.$$
Thus
\begin{equation}
||\mathcal{F}_{\pm}(f)(\lambda)||_c\leq C'e^{\frac{||\lambda||_c^2}{2}},
\end{equation}
where C' is a  positive constant.\\
As $\mathcal{F}_{\pm}(f)$ is an entire function satisfying relation (5.2) and (5.3), lemma 2.1 in\rm[8] allows to express $\mathcal{F}_{\pm}(f)$ as follows :
$$\mathcal{F}_{\pm}(f)(x)=A e^{-\frac{||x||_c^2}{2}},$$ with A is a constant.\\
\noindent Theorem 3.5 completes the proof.

\end{proof}


\begin{thebibliography}{6}
\bibitem{key-1}C. Brackx F. , N. De Schepper and F. Sommen  , {\it The Clifford-Fourier transform}. J. Fourier Anal.
Appl. 11(2005), 669-681.
\bibitem{key-2}F. Brackx, N. De Schepper and F. Sommen, {\it The Cylindrical Fourier Spectrum of an L2-basis consisting
of Generalized Clifford-Hermite Functions}.
Clifford Research Group, Department of Mathematical Analysis, Faculty of Engineering, Ghent University, Galglaan 2, 9000 Gent, Belgium 
\bibitem{key-3} H. De Bie and Y. Xu , {\it  On the Clifford-Fourier transform}. J. Fourier Anal.
Appl. 11(2005), 669-681.
\bibitem{key-5} H. De Bie, N. De Schepper, F. Sommen, {\it The Class of Clifford-Fourier Transforms.} J. Fourier Anal Appl(2011),17:1198-1231.
\bibitem{key-12} G.H., Hardy, {\it A theorem concerning Fourier transform}, J. London Math. Soc. 8 (1933), 227-231.
\bibitem {key-7} M. Rosler, {\it An uncertainty principle for  the Dunkl transform}, Bull. Austral. Math. Soc.59(1999),353-360.
\bibitem{key-4} Nobukazu Shimeno, {\it A Note on the Uncertainty  Principle for the Dunkl Transform} J. Math. Univ. Tokyo(2001),33-42.
\bibitem {key-9} A. Sitaram  and M. Sundari, {\it An analogue of Hardy's theorem for very rapidly decreasing functions on semi-simple Lie groups}, Pacific J. Math. 177(1997), 187-200. 
\bibitem{key-8} Weyl, {\it Gruppentheorie and Quantenmechanik}, S. Hirzel, Leipzig,1928. Revized  English edition: The Theory of Groups and Qunantum Mechanics, Methuen, London 1931; reprinted by Dover, New York,1950.
\end{thebibliography}
\end{document}